\newcommand\longleftmapsto{\longleftarrow\joinrel\mapstochar}
\newtheorem{thm}{Theorem}[section] 
\newtheorem{cor}[thm]{Corollary}
\newtheorem{prop}[thm]{Proposition}
\theoremstyle{definition}
\newtheorem{rem}[thm]{Remark}
\DeclareMathOperator{\mychar}{char} %
\newcommand\operA[2]{{\if!#2!\operatorname{#1}\else{\operatorname{#1}_{#2}^{\phantom{I}}}\fi}} 
\newcommand\set[1]{\{#1\}}
\newcommand\eq[1]{{(\ref{#1})}}
\newcommand\Pref[1]{{Proposition~\ref{#1}}}%
\newcommand\Cref[1]{{Corollary~\ref{#1}}}%
\newcommand\Rref[1]{{Remark~\ref{#1}}}%
\newcommand\Tref[1]{{Theorem~\ref{#1}}}%
\newcommand\Sref[1]{{Section~\ref{#1}}}%
\newcommand\tensor[1][]{{\otimes_{#1}}}
\def\sub{\subseteq}
\def\tr{{\operatorname{tr}}}
\def\disc{{\operatorname{disc}}}
\def\s{\sigma}
\newcommand\res[1][{}] {{\operatorname{res}_{#1}}}
\newcommand{\Trace}[1][]{\if!#1!\operatorname{Tr}\else{\operatorname{Tr}_{#1}^{\phantom{I}}}\fi} 
\long\def\forget#1\forgotten{{}} %
\newcommand\suchthat{{\,:\ \,}}
\def\co{{\,{:}\,}}
\newcommand\lam{{\lambda}}
\newcommand\etale{{\'{e}tale}}
\newcommand\Zent{{\operatorname{Z}}}
\def\({\left(}
\def\){\right)}
\newcommand\isom{{\,\cong\,}}
\def\Cass{{\operatorname{C}}}
\def\Calt{{\operatorname{C}^{\operatorname{alt}}}}
\def\Caltf{{\hat{C}^{\operatorname{alt}}}}
\newcommand{\sgn}[1]{{\operatorname{sgn}(#1)}}
\newcommand\Qf[1]{{\left<#1\right>}}
\newcommand\Pf[1]{{\left<\!\left<#1\right>\!\right>}}
\renewcommand\H[2]{{\operatorname{H}^{#1}\!\!\;({#2})}}
\newcommand\GP[1]{{\operatorname{GP}_{#1}}}
\renewcommand\P[1]{{\operatorname{P}_{#1}}}
\def\Quad{{\operatorname{Quad}}}
\newcommand\abc{{\alpha_1\alpha_2\alpha_3}}
\newcommand\ourphi{{\Qf{\alpha_1,\alpha_2,\alpha_3}}}
\newcommand\ourw{{\mathfrak{d}}}
\newif\iffurther
\newif\ifXY 
\newcommand\sg[1]{{\left<#1\right>}}
\begin{document}


\title{The Alternative Clifford Algebra of a Ternary Quadratic Form}

\author{Adam Chapman}
\email{adam1chapman@yahoo.com}
\address{Department of Computer Science, Tel-Hai College, Upper Galilee, 12208 Israel}
\author{Uzi Vishne}
\email{vishne@math.biu.ac.il}
\address{Department of Mathematics, Bar-Ilan University, Ramat Gan, 55200 Israel}
\keywords{Clifford algebras, Alternative algebras, Octonion algebras, Quadratic forms}
\subjclass[2010]{17D05, 11E04, 11E81, 15A66}
\thanks{The second named author was partially supported by an Israel Science Foundation grant \#1623/16.}

\maketitle

\def\ra{{\rightarrow}}

\begin{abstract}
We prove that the alternative Clifford algebra of a nondegenerate ternary quadratic form is an octonion ring 
whose center is the ring of polynomials 
in one variable over the field of definition. 
\end{abstract}


\section{Introduction}

The Clifford algebra of a quadratic form is both an important invariant of the form and an interesting object in its own right. We propose a detailed study of the alternative Clifford algebra, defined in the same manner as the classical Clifford algebra, but in the realm of alternative algebras.

Let $F$ be a field of arbitrary characteristic and let $\varphi \co V \ra F$ be a quadratic form on an $n$-dimensional vector space~$V$ over~$F$.
The classical Clifford algebra $\Cass(\varphi)$ is defined as the tensor algebra of~$V$ modulo the relations $v^2 = \varphi(v)$ for every $v \in V$.
Its structure is well understood:~$\Cass(\varphi)$ is a simple algebra of dimension~$2^n$ over~$F$, whose center is~$F$ when~$n$ is even, and a quadratic \etale\ extension, corresponding to the discriminant~$\disc(\varphi)$, when $n$ is odd. The natural involution is defined by mapping $v \mapsto -v$ for all $v \in V$.
The algebra thus  has order dividing~$2$ in the Brauer group of its center. Indeed, the Clifford algebra is the second cohomological invariant of quadratic forms (see \cite[Section~14]{EKM}).

An alternative analogue was recently introduced by Musgrave \cite{Musgrave:2015}, who defined the {\bf{alternative Clifford algebra}} $\Calt(\varphi)$ as the
alternative tensor algebra of~$V$ modulo the same relations $v^2 = \varphi(v)$. Explicitly, the alternative Clifford algebra is the
free alternative algebra generated by $x_1,\dots,x_n$ over~$F$, modulo the relations
$$(u_1 x_1+\dots+u_n x_n)^2=\varphi(u_1,\dots,u_n)$$
for every choice of $u_1,\dots,u_n \in F$.
(Chapter~13 of~\cite{ZSSS} is devoted to the free alternative algebra.)
The main motivation is to study, through an algebraic construction, the higher cohomological invariants of quadratic forms (see \cite[Section 15 and 16]{EKM} and \cite{Voevodsky:2003}). When $n \leq 2$ the alternative Clifford algebra is identical to the associative one. Musgrave asks if the alternative Clifford algebra is finite dimensional, especially when $n=3$ (see \cite[Questions~3.1 and~3.4]{Musgrave:2015}).

In this paper we provide a complete description of the alternative Clifford algebra $\Calt(\varphi)$ for nondegenerate ternary forms.
\begin{thm}\label{Main}
Let $(V,\varphi)$ be a nondegenrate ternary quadratic space over a field~$F$. The alternative Clifford algebra~$\Calt(\varphi)$ is an octonion ring whose center is the ring of polynomials in one variable over~$F$.
\end{thm}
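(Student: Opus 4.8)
I would begin by diagonalizing $\varphi\cong\Qf{\alpha_1,\alpha_2,\alpha_3}$ with $\alpha_i\in\mul F$, so that $\Calt(\varphi)$ becomes the free alternative $F$-algebra on $x_1,x_2,x_3$ modulo $x_i^2=\alpha_i$ and $x_ix_j=-x_jx_i$ for $i\ne j$; the characteristic $2$ case is parallel, working with the normal form $\Qf{\alpha_1}\perp[\alpha_2,\alpha_3]$. The three elements to keep track of are $A=(x_1x_2)x_3$, the associator $z=(x_1,x_2,x_3)$, and the symmetrized product $\tau=(x_1x_2)x_3+x_3(x_1x_2)$. A short manipulation with the linearized left alternative law gives $x_3(x_1x_2)=x_1(x_2x_3)$, whence $z=2A-\tau$; and since $x_1x_2$ and $x_3$ generate an associative subalgebra (Artin's theorem) in which the symmetrized product $\tau$ is central, one reads off the quadratic relation $A^2=\tau A+\abc$ and therefore $z^2=\tau^2+4\abc$. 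Finally $x_3=-(\alpha_1\alpha_2)^{-1}(x_1x_2)A$, using the inverse law $a^{-1}(ab)=b$ valid in alternative rings together with $(x_1x_2)^{-1}=-(\alpha_1\alpha_2)^{-1}(x_1x_2)$.

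I would then write down the expected answer and map into it. Let $t$ be an indeterminate and let $\mathcal O$ be the Cayley--Dickson double of the quaternion ring $(\alpha_1,\alpha_2)_{F[t]}$ with generator $z$, $z^2=t^2+4\abc$ (in characteristic $2$ the Cayley--Dickson datum is instead the element $A$ with $A^2=\tau A+\abc$). Then $\mathcal O$ is free of rank $8$ over its centre $F[t]$, with multiplicative norm form $\Pf{\alpha_1,\alpha_2,t^2+4\abc}$, so it is an octonion ring. Writing $i,j$ for the standard quaternion generators, one checks that $x_1\mapsto i$, $x_2\mapsto j$, $x_3\mapsto\tfrac1{2\alpha_1\alpha_2}(ij)(t+z)$ respects the defining relations of $\Calt(\varphi)$, hence extends to a ring homomorphism $\psi\co\Calt(\varphi)\to\mathcal O$; one computes $\psi(\tau)=-t$, and $\psi$ is surjective because $i$, $j$ and $\psi(x_3)$ already generate $z$ and $t$ over $F$. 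In particular $\tau$ is transcendental over $F$, so the subring $F[\tau]\subseteq\Calt(\varphi)$ is a genuine polynomial ring.

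The heart of the proof would be an upper bound: that $\tau$ is central in $\Calt(\varphi)$ and that $\Calt(\varphi)$ is spanned, as an $F[\tau]$-module, by the eight elements $1,x_1,x_2,x_1x_2,x_3,x_1x_3,x_2x_3,(x_1x_2)x_3$. Centrality of $\tau$ comes down to its commuting with $x_1$ and $x_2$ and being nuclear, which I would extract from the Moufang identities together with $x_3(x_1x_2)=x_1(x_2x_3)$. For the spanning statement it is cleanest to show that $\Calt(\varphi)$ coincides with the subalgebra $B$ generated by $x_1,x_2$ and $A$: since $A^2=\tau A+\abc$ and $A$ anticommutes with $x_1,x_2$ up to $F[\tau]$-multiples, $B$ is a Cayley--Dickson double $Q_\tau\oplus Q_\tau A$ of $Q_\tau=(\alpha_1,\alpha_2)_{F[\tau]}$ (the $F[\tau]$-subalgebra generated by $x_1,x_2$, free of rank $4$ by the previous paragraph), so $B$ is free of rank $8$ over $F[\tau]$; it contains $x_3=-(\alpha_1\alpha_2)^{-1}(x_1x_2)A$, and $B\subseteq\Calt(\varphi)$ since $A\in\Calt(\varphi)$, so $B=\Calt(\varphi)$. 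Concretely this requires a systematic normal form for words in $x_1,x_2,x_3$: repeated use of $x_i^2=\alpha_i$, anticommutativity, the linearized left and right alternative laws, the Moufang identities and the inverse law should reduce every monomial to an $F[\tau]$-combination of the eight listed elements, the delicate cases being products such as $(x_ix_3)(x_jx_3)$ and $((x_1x_2)x_3)x_i$.

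Granting the upper bound the proof closes at once: $\psi$ is an $F[\tau]$-linear surjection from a module generated by those eight elements onto the free rank-$8$ module $\mathcal O$, and one checks that $\psi$ carries the eight generators to an $F[t]$-basis of $\mathcal O$; hence $\psi$ is an isomorphism and $\Calt(\varphi)\cong\mathcal O$ is an octonion ring with centre $F[t]$. I expect the word reduction of the third paragraph to be the genuine obstacle — one must control \emph{all} monomials in three free alternative generators modulo only the quadratic Clifford relations, which is precisely the difficulty underlying Musgrave's finiteness question — and it is the failure of $(x_1x_2)x_3$ and $x_1(x_2x_3)$ to agree, hence the freedom in their symmetric combination $\tau$, that accounts for the polynomial centre in one variable, in contrast to the at-most-quadratic centre of the associative Clifford algebra.
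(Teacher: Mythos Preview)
Your strategy is exactly the paper's: your $\tau$ is the paper's ``alternative discriminant'' $\ourw=\delta(x_1,x_2,x_3)$, your $z$ is (up to sign) the paper's $z=[w,uv]=-(u,v,w)$, the computation $z^2=\tau^2+4\alpha_1\alpha_2\alpha_3$ matches the paper's $z^2=\ourw^2+d_0\varphi(w)$, and the map into the explicit octonion ring over $F[t]$ is the same reverse construction the paper uses to prove transcendence of~$\ourw$. (The paper works with a basis $u,v,w$ in which only $w$ is assumed orthogonal to $u,v$, which lets it handle all characteristics uniformly rather than splitting off a parallel argument in characteristic~$2$.)

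The step you flag as ``the genuine obstacle'' is precisely where the paper's argument is cleanest, and no word-by-word monomial reduction is needed. Centrality of~$\tau$ reduces by Bruck--Kleinfeld to $[\tau,x_i]=0$ and $(\tau,x_i,x_j)=0$ on generators, and these fall out of the two identities $[a\circ b,a]=[b,a^2]$ and $(a,a\circ b,c)=(a^2,b,c)$ with $a$ a generator (so $a^2$ scalar). For the upper bound, instead of controlling all products in $Q_\tau\oplus Q_\tau A$, it suffices to check the single Cayley--Dickson condition $z\circ a=\tr(a)\,z$ for $a=x_1,x_2,x_1x_2$: the identity $a\circ(a,b,c)=(a^2,b,c)$ gives $z\circ x_i=(x_i^2,\,\cdot\,,\,\cdot\,)=0$ immediately, and $z\circ(x_1x_2)=[x_3,(x_1x_2)^2]=0$ since $(x_1x_2)^2$ is scalar. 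In any alternative ring, that condition together with $z^2$ central forces $Q+Qz$ to close up under multiplication with the Cayley--Dickson formulas, so the ``delicate cases'' $(x_ix_3)(x_jx_3)$ and $((x_1x_2)x_3)x_i$ you worry about are automatic. In short, the missing idea is that the identity $a\circ(a,b,c)=(a^2,b,c)$, applied to generators whose squares are scalars, replaces the entire word-reduction programme.
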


See \Tref{Main+} and its corollaries for an explicit description of the algebra.
The proof revolves around the ``alternative discriminant'', studied in \Sref{sec:disc}, which turns out to be a transcendental element in the center of the alternative algebra.
Following the description of the alternative Clifford algebra in \Sref{sec:main}, we observe that $\Calt(\varphi)$ has finitely many associative simple quotients, and consequently (\Cref{localize}):
\begin{cor}
$\Calt(\varphi)$ has a central localization which is
``alternative Azumaya'' in the sense that its simple quotients are all octonion algebras.
\end{cor}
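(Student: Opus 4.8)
The plan is to read the structure of $C:=\Calt(\varphi)$ off the explicit description furnished by \Tref{Main+}, where $C$ appears as an octonion ring with centre $Z=F[\ourw]$, $\ourw$ being the transcendental alternative discriminant, together with its norm $N$: a quadratic form of rank~$8$ over~$Z$ with $N(1)=1$ and $N(xy)=N(x)N(y)$. First I would isolate the single polynomial that governs where this norm degenerates: in a $Z$-basis of~$C$, a Gram matrix of the polar form $b_N(x,y)=N(x+y)-N(x)-N(y)$ has a determinant $d=d(\ourw)\in Z$, and $d\neq 0$ because the generic fibre $C\tensor[Z]F(\ourw)$ is an octonion algebra over the field $F(\ourw)$, whose norm is nonsingular. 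Since $Z$ is a principal ideal domain, $d$ is divisible by finitely many primes $\ideal{p_1},\dots,\ideal{p_k}$ only. This already accounts for the observation preceding the corollary: for a maximal ideal $\mathfrak m$ of~$Z$ not containing~$d$, the reduction $C/\mathfrak m C$ is a unital rank~$8$ algebra over $Z/\mathfrak m$ carrying a nonsingular norm permitting composition, hence an octonion algebra by the Hurwitz classification of composition algebras (and as such a simple, non-associative ring); whereas at $\ideal{p_1},\dots,\ideal{p_k}$ the reduction is a degenerate composition algebra, whose simple quotients are all associative.

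Next I would pass to the central localization $C':=C\tensor[Z]Z'$ with $Z':=Z[d^{-1}]$, again a principal ideal domain; this is the localization of~$C$ at the powers of~$d$, which lie in the centre. Over $Z'$ the element~$d$ is a unit, so $b_N$ is everywhere nondegenerate and $N$ is everywhere a nonsingular rank~$8$ norm; hence $C'$ is an Azumaya octonion $Z'$-algebra, that is, \etale-locally isomorphic to the split octonion algebra. By the ideal correspondence for Azumaya octonion algebras — the two-sided ideals of such an algebra are exactly the extended ideals $\mathfrak a C'$, $\mathfrak a\normali Z'$, just as for Azumaya associative algebras — every simple quotient of~$C'$ equals $C'/\mathfrak m C'$ for a maximal ideal $\mathfrak m\normali Z'$, which by the previous paragraph is an octonion algebra over the field $Z'/\mathfrak m$ (and so a simple ring). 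Thus $C'$ is ``alternative Azumaya'', which is the corollary.

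I do not expect a genuine obstacle: given \Tref{Main+} the corollary is essentially bookkeeping, and the localization — inverting the discriminant~$d$ of the norm form — is forced on us. The one point to pin down carefully is the ideal correspondence for Azumaya octonion algebras, which should be quoted with a precise reference from the theory of octonion algebras over commutative rings, or reproved by faithfully flat descent from the split case. A minor caveat, felt only in characteristic~$2$, is that ``nonsingular'' for~$N$ must be read as nondegeneracy of the quadratic form — equivalently, the now alternating form $b_N$ has unit Pfaffian, so $d$ should there be taken to be that Pfaffian (or its square) — and with that convention, inverting~$d$ is exactly what turns every residue algebra of~$C'$ into an honest octonion algebra.
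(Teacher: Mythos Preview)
Your argument is correct, but it takes a more abstract route than the paper. The paper first classifies \emph{all} simple quotients of $\Calt(\varphi)$ explicitly (\Pref{simplequot}): sending the central generator $\ourw\mapsto\theta\in\bar F$, the quotient is the octonion algebra $(Q,\theta^2+d_0\varphi(w))_{F[\theta]}$ whenever $\theta^2+d_0\varphi(w)\neq 0$, and otherwise $z^2\mapsto 0$ forces $z\mapsto 0$ by simplicity, leaving the associative Clifford algebra. The corollary then follows in one line by inverting the single central element $z^2=(u,v,w)^2=\ourw^2+d_0\varphi(w)$. Your approach instead reads off the rank~$8$ norm form from \Tref{Main+}, inverts its Gram determinant~$d$, and appeals to Hurwitz plus an ideal correspondence for Azumaya octonion algebras. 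The two localizations coincide: in the standard basis the determinant of $b_N$ is, up to units of~$F$, a power of $\ourw^2+d_0\varphi(w)$, so $Z[d^{-1}]=Z[(z^2)^{-1}]$. What the paper's approach buys is elementarity --- no outside input beyond the structure theorem and the observation that $z^2\mapsto 0$ collapses the algebra to~$Q$; what yours buys is generality --- the same template (invert the norm discriminant, then every residue is a nondegenerate composition algebra of rank~$8$) would handle any octonion ring over a PID, at the cost of importing or reproving the ideal correspondence you flag.
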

In \Sref{sec:ci} we discuss the alternative Clifford algebra in the context of cohomological invariants.

\section*{Acknowledgements}
We thank Ivan Shestakov and for an insightful discussion and for
his careful comments on an earlier version of this paper, and an anonymous referee for suggesting significant shortcuts in the proof of the main result.

\medskip


\section{Background}\label{sec:back}

We briefly provide necessary background on alternative algebras.

\subsection{Nonassociative rings and algebras}

Let $A$ be a nonassociative ring. The {\bf{commutator}} of $x,y \in A$ is the element $[x,y] = xy - yx$. The {\bf{associator}} of $x,y,z \in A$ is the element $(x,y,z) = (xy)z - x(yz)$.
The {\bf{nucleus}} (also the ``associative center'') of $A$ is the set
$$N(A) = \set{a \in A \suchthat (a,A,A) = (A,a,A) = (A,A,a) = 0},$$
and the {\bf{center}} is
$$Z(A) = \set{a \in N(A) \suchthat [a,A] = 0}.$$
The nucleus is an associative subring of $A$, and the center is a commutative subring of the nucleus.

\subsection{Alternative rings}

A nonassociative ring~$A$ is {\bf{alternative}} if  $$(a,a,b) = (a,b,b) = 0$$
for every $a,b \in A$.
Linearizing, it follows that the associator alternates:
$$(a_{\s (1)},a_{\s (2)}, a_{\s (3)}) = \sgn{\s} (a_1,a_2,a_3)$$
for any $a_1,a_2,a_3 \in A$ and $\s \in S_3$. Any two elements of an alternative ring generate an associative subring (Artin, \cite[Chapter 2, Theorem~2]{ZSSS}). 

A~simple alternative ring is either associative or an octonion algebra over its center, which is a field \cite[Chapter 7, Section 3, Corollary 1]{ZSSS}. A prime alternative ring~$A$ is either associative or an octonion ring, provided that $3A \neq 0$ \cite[Chapter 9, Corollary to Theorem~9]{ZSSS}.

We list some useful identities. Let $L_b,R_b \co A \rightarrow A$ denote the multiplication maps $L_b(c) = bc = R_c(b)$. For any $b,c \in A$, we denote $b \circ c = bc + cb$. Rather than composition, we use the same notation for operators: $R_b \circ R_c = R_bR_c+R_cR_b$.
\begin{rem}\label{class}
Let $b, c\in A$. Then $R_b \circ R_c = R_{b \circ c}$ and
$L_b \circ L_c = L_{b \circ c}$.
\end{rem}

\begin{cor}\label{RR}
If $b \circ c = 0$ then $R_b,R_c$ anticommute and $L_b,L_c$ anticommute.
\end{cor}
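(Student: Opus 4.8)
The plan is to read the corollary straight off \Rref{class}. First I would note that the assignment $b \mapsto R_b$ is additive, since multiplication in a nonassociative ring distributes over addition; in particular $R_0 = 0$, and likewise $L_0 = 0$. Now suppose $b \circ c = 0$. Applying \Rref{class} gives $R_b \circ R_c = R_{b \circ c} = R_0 = 0$, where the outer $\circ$ is, per the convention fixed just before the remark, the operator expression $R_b R_c + R_c R_b$. Hence $R_b R_c = -R_c R_b$, i.e.\ $R_b$ and $R_c$ anticommute. Replacing $R$ by $L$ throughout and invoking the second identity of \Rref{class} yields that $L_b$ and $L_c$ anticommute as well.

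There is essentially no obstacle here: all the content sits in \Rref{class}, and the corollary merely records its specialization to the case $b \circ c = 0$. The only point that deserves a word of care is the notational convention that $\circ$ applied to operators means $R_b R_c + R_c R_b$ rather than composition, so that the hypothesis $b \circ c = 0$ translates into the anticommutation relation $R_b R_c + R_c R_b = 0$ and not into a statement about the single product $R_b R_c$.
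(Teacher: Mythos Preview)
Your argument is correct and is exactly the intended one: the paper states this as an immediate corollary of \Rref{class} with no further proof, and your spelling-out of $R_{b\circ c}=R_0=0$ together with the operator convention $R_b\circ R_c=R_bR_c+R_cR_b$ is precisely the missing sentence.
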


For every $a,b$ in an alternative ring we have that
\begin{equation}\label{clear}
{}[a\circ b, a] = [b,a^2],
\end{equation}
because the subring generated by $a,b$  is associative.

An alternative algebra satisfies the Moufang identities \cite[Chapter 2, Lemma~7]{ZSSS}, which imply the identities
\begin{eqnarray}
(a,[b,a],c) & = & [a,(a,b,c)] \label{[a(abc)]}, \\
a \circ (a,b,c) & = & (a^2,b,c)\label{ao(abc)},
\end{eqnarray}
and \begin{equation}
(a,a\circ b,c) = (a^2,b,c)\label{(a,aob,c)},
\end{equation}
see \cite[Chapter 2, Equations (16)--(17$'$)]{ZSSS}.

\begin{rem}\label{BK}
Let $S$ be a set of generators for an alternative algebra. An element $u$ is central in the algebra if and only if $[u,S] = (u,S,S) = 0$
(Bruck-Kleinfeld's lemma, \cite[Chapter 13, Lemma~16]{ZSSS}). Similarly, $u$ is in the nucleus if and only if $(u,S,S) = (u,S,SS) = (u,SS,SS) = 0$
\cite[Lemma~2.4]{Sh-Zh}.
\end{rem}

Of some relevance is \cite{Sh-Zh}, which fully describes the free alternative superalgebra in one odd generator. By definition, an odd generator anti-commutes with itself. Shestakov and Zhukavets found a correspondence between this algebra and the skew part of the alternative Grassmann algebra, which can be viewed as an epimorphic image of our alternative Clifford algebra for the zero quadratic form (in any dimension). We do not give further details here, because the dimension of the alternative Grassmann algebra on any finite number of generators is finite, hence too small to be directly helpful for our cause.

\section{The alternative discriminant}\label{sec:disc}

\newcommand\bil{{\varphi}} 
\newcommand\bilalone{{\varphi(\cdot,\cdot)}} 

Let $\varphi \co V \ra F$ be a nondegenerate ternary quadratic form, and write $\bilalone$ for the underlying symmetric bilinear form given by $\bil(v,w)=\varphi(v+w)-\varphi(v)-\varphi(w)$. We consider the generating subspace $V \sub \Calt(V,\varphi)$.
Since $u^2 = \varphi(u)$ for every $u \in V$, we have for $u,v \in V$ that $u\circ v=\bil(u,v)$ and $u\circ u=2\varphi(u)=\bil(u,u)$.
For $u,v,w \in V$, let $$\delta(u,v,w) = u\circ (vw)-\bil(v,w) u = u(vw)-(wv)u.$$
\begin{prop}
The form $\delta$ alternates.
\end{prop}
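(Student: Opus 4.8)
The plan is to use that $\delta$ is $F$-trilinear together with the two facts that make $V$ special inside $\Calt(V,\varphi)$: every vector squares to a central scalar, $u^2=\varphi(u)\in F$, and hence $u\circ v=\varphi(u,v)$. Trilinearity is clear (the product $vw$ and the bilinear form $\varphi(v,w)$ are $F$-bilinear in $v,w$, and the dependence on $u$ is $F$-linear), so the claim reduces to showing that $\delta$ vanishes whenever two of its arguments coincide; antisymmetry of a trilinear form then follows by the standard linearization, e.g.\ $0=\delta(u+u',u+u',w)=\delta(u,u',w)+\delta(u',u,w)$ and likewise in the last two slots, and the transpositions $(1\,2)$ and $(2\,3)$ generate $S_3$.

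For $\delta(u,v,v)$ the argument is immediate: $vv=v^2=\varphi(v)$ lies in $F$, so $u\circ(vv)=2\varphi(v)\,u=\varphi(v,v)\,u$ and $\delta(u,v,v)=0$. For $\delta(u,u,w)$ I would invoke Artin's theorem, so that $u$ and $w$ lie in a common associative subring (in fact only the left and right alternative laws are needed): then $u\circ(uw)=u(uw)+(uw)u=u^2w+(uw)u=\varphi(u)\,w+(uw)u$, whereas $\varphi(u,w)\,u=(u\circ w)\,u=(uw)u+(wu)u=(uw)u+wu^2=(uw)u+\varphi(u)\,w$, and subtracting yields $\delta(u,u,w)=0$. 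Finally $\delta(u,v,u)=0$, either by the same kind of direct manipulation or, more cheaply, from the antisymmetry just established, $\delta(u,v,u)=-\delta(v,u,u)=0$. Hence $\delta$ vanishes on all pairs of equal arguments, i.e.\ it alternates.

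There is no genuine obstacle here: the computation uses only the defining alternative identities and the scalarity of $v^2$. The one point deserving care is characteristic $2$, where ``alternating'' must mean ``vanishing on equal arguments'' rather than merely ``antisymmetric'', which is why it is safest to check the diagonal identities directly; any two of the three suffice (and all three hold, as shown). As a sanity check one may also use the second expression $\delta(u,v,w)=u(vw)-(wv)u$: since the associator alternates, $(u,v,w)+(w,v,u)=0$ gives $u(vw)+w(vu)=(uv)w+(wv)u$, that is $\delta(w,v,u)=-\delta(u,v,w)$, recovering antisymmetry under the outer transposition independently.
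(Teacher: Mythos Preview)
Your proof is correct and follows essentially the same approach as the paper: both verify the two diagonal identities $\delta(u,v,v)=0$ and $\delta(u,u,w)=0$ directly from $v^2=\varphi(v)$ and the alternative laws (the paper writes the second computation as $\varphi(u)w+uwu-\varphi(u,w)u=\varphi(u)w-(wu)u=0$, which is your argument in slightly condensed form). Your added remarks on trilinearity, characteristic~$2$, and the sanity check via $\delta(u,v,w)=u(vw)-(wv)u$ are all fine elaborations but not strictly needed.
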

\begin{proof}
We have that
$$\delta(u,v,v)=2\varphi(v)u-\bil(v,v) u=0$$ and
\begin{eqnarray*}
\delta(u,u,w) & = & u \circ (uw)-\bil(u,w)u \\
& = & \varphi(u)w+uwu-\bil(u,w)u=\varphi(u)w-(wu)u=0.
\end{eqnarray*}
\end{proof}
Consequently, the one-dimensional space $F \delta(u,v,w)$ is independent of the choice of basis $u,v,w$ of~$V$. Notice that we can always choose a basis for which $v,w$ are orthogonal with respect to $\bilalone$, in which case $\delta(u,v,w) = u \circ (vw)$.

\begin{prop}\label{herecent}
The element $\delta(u,v,w)$ is central in $\Calt(V,\varphi)$.
\end{prop}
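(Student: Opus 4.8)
The plan is to apply the Bruck--Kleinfeld criterion recalled in Remark~\ref{BK}: since $\Calt(V,\varphi)$ is generated by $V$, it suffices to show that $d := \delta(u,v,w)$ satisfies $[d,V] = 0$ and $(d,V,V) = 0$. As the commutator and the associator are multilinear, it is enough to test these on the basis $u,v,w$; and because $\delta$ alternates, the element $d$ is unchanged under cyclic permutations of $(u,v,w)$ and only changes sign under transpositions, which will let me reduce each verification to a single computation.

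For the commutator, write $d = u\circ(vw) - \varphi(v,w)\,u$. Then $[d,u] = [u\circ(vw),u]$, and identity~\eqref{clear} rewrites this as $[vw,u^2] = [vw,\varphi(u)] = 0$, since $u^2 = \varphi(u) \in F$ is central. Replacing $(u,v,w)$ by its cyclic rotations and using $d = \delta(v,w,u) = \delta(w,u,v)$ gives $[d,v] = [d,w] = 0$ as well, so $[d,V] = 0$.

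For the associator, the key is identity~\eqref{(a,aob,c)}, namely $(a,a\circ b,c) = (a^2,b,c)$. Taking $a = u$, $b = vw$ and $c\in V$ arbitrary, the right-hand side is $(\varphi(u),vw,c) = 0$ because the scalars $F$ lie in the center, while the left-hand side is $(u,\,d + \varphi(v,w)u,\,c) = (u,d,c)$, using $(u,u,c)=0$. Hence $(u,d,c)=0$, and by the alternating property of the associator $(d,u,c)=0$ for every $c\in V$. The cyclic symmetry of $\delta$ then yields $(d,v,c)=0$ and $(d,w,c)=0$ for all $c\in V$, so $(d,V,V)=0$. Combined with the commutator step, Remark~\ref{BK} gives $d \in Z(\Calt(V,\varphi))$.

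I do not expect a genuine obstacle here. The only delicate points are keeping track of the alternation signs when passing between $(u,v,w)$ and its permutations, and the (routine) fact that the scalars $F$ sit inside the center of $\Calt(V,\varphi)$, which is what makes $[vw,u^2]$ and $(u^2,vw,c)$ vanish. All the real content is packaged into the identities~\eqref{clear} and~\eqref{(a,aob,c)} already extracted from the Moufang identities.
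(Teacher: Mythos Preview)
Your proof is correct and follows essentially the same approach as the paper: invoke the Bruck--Kleinfeld criterion and verify $[d,V]=0$ and $(d,V,V)=0$ using the alternation of~$\delta$ together with identities~\eqref{clear} and~\eqref{(a,aob,c)}. Your associator step is in fact slightly cleaner than the paper's: where the paper chains two identities to reach $(u,v\circ u,vw)=0$, you go directly via $(u,u\circ(vw),c)=(u^2,vw,c)=0$, handling all $c\in V$ at once.
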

\begin{proof}
By \Rref{BK} one only has to verify that $\delta(u,v,w)$ commutes and associates with elements in $V$, which follows from:
$$[u, \delta(u,v,w)] = [u,u \circ (vw)]  - \bil(v,w)[u,u] = [u^2,vw] = 0$$
and
$$(u,v,\delta(u,v,w))=(u,v,u\circ(vw))=(u,v,vw)\circ u=(u,v\circ u,vw)=0.$$
\end{proof}

\section{The alternative Clifford algebra of a ternary form}\label{sec:main}

Let $(V,\varphi)$ be a nondegenerate 3-dimensional quadratic space. Let $V_0$ be a nondegenerate 2-dimensional subspace of $V$. Then $Q = \Calt(V_0,\varphi|_{V_0})$ is a quaternion algebra over $F$. Fix a basis $\set{u,v}$ of $V_0$, and let $d_0 = 4 \varphi(u)\varphi(v) - \bil(u,v)^2$ be the determinant of $\varphi|_{V_0}$ with respect to this basis, which is nonzero by assumption. Since $u \circ v = \bil(u,v)$, we have that
\begin{equation}\label{uv2}
{}[u,v]^2 = (uv+vu)^2 - 2(uvvu+vuuv) = \bil(u,v)^2 - 4 \varphi(u)\varphi(v) = -d_0.
\end{equation}

Let $w \in V$ be orthogonal to $V_0$, so that $V = V_0 \perp F w$. It follows that $u \circ w = v \circ w = 0$. Let $\ourw = \delta(u,v,w)$, which is central by \Pref{herecent}.

An octonion algebra is by definition the Cayley-Dickson double of a quaternion algebra over a field.
A ring~$R$ which is a faithful module over an integral domain~$C \sub \Zent(R)$ is an octonion ring if $(C -\set{0})^{-1}R$ is an octonion algebra. 
Octonion rings can be recognized in the following manner.
\begin{rem}\label{fadicha2}  
Let~$R$ be an alternative ring which is a faithful module over an integral domain~$C$. Let $Q \sub R$ be a quaternion subring, with the reduced trace $\tr \co Q \ra C$.
If~$z \in R$ satisfies
\begin{equation}\label{za}
z \circ a = \tr(a)z
\end{equation} for every $a \in Q$ and $z^2 = \gamma \in C$ is nonzero, then the subalgebra generated by $Q$ and $z$ in $R$ is the octonion algebra $(Q,\gamma)_C$.
\end{rem}

We now prove \Tref{Main} in the following form:
\begin{thm}\label{Main+}
The alternative Clifford algebra $\Calt(V,\varphi)$ is the octonion ring
$$(Q,\ \ourw^2+ d_0 \varphi(w))_{F[\ourw]},$$
and $F[\ourw]$ is a polynomial ring over $F$.
\end{thm}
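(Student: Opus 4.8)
The plan is to realize $\Calt(V,\varphi)$ explicitly as the octonion ring claimed, using the generators $u,v,w$ and the central element $\ourw = \delta(u,v,w)$. The key structural facts are already in place: $Q = F\langle u,v\rangle$ is a quaternion algebra, $\ourw$ is central by \Pref{herecent}, and $u\circ w = v\circ w = 0$. So the argument splits into three pieces. First, I would show that $\ourw^2$ lies in $F[\ourw^2]\cap\ldots$ — more precisely, compute $\ourw^2$ in terms of $\varphi$ and relate it to an element of $F[\ourw]$. Since $\ourw = u\circ(vw)$ when $u\perp v$ (or in general $\ourw = u\circ(vw) - \bil(v,w)u$), and $\ourw$ is central, I can square it inside the associative subring generated by suitable pairs of elements; using the Moufang-derived identities and the relations $u^2=\varphi(u)$, $v^2=\varphi(v)$, $w^2=\varphi(w)$, $u\circ w = v\circ w = 0$, $u\circ v = \bil(u,v)$, a direct (but careful) computation should yield $\ourw^2 = $ something like $\gamma := \ourw^2 + d_0\varphi(w)$ being a square of $w$ times a unit, or rather that $w^2 = \varphi(w)$ forces a relation making $\gamma$ the correct Cayley–Dickson scalar. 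The cleanest route: check that $z := w$ (or $z := vw$, or a suitable product) satisfies $z\circ a = \tr(a)z$ for all $a\in Q$ and $z^2 = \gamma\in F[\ourw]$, then invoke \Rref{fadicha2}.

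Concretely, I would take the tentative $z$ to be $w$ itself. For $a\in Q = F\langle u,v\rangle$: we know $u\circ w = v\circ w = 0$ and $1\circ w = 2w = \tr(1)w$, so on the basis $1,u,v,uv$ of $Q$ it remains to check $uv\circ w = \tr(uv)w = \bil(u,v)w$ (using $\tr(uv) = u\circ v$ since $\tr(a) = a\circ 1$ and... actually $\tr(uv)$ must be computed). This is exactly where $\ourw$ enters: $(uv)\circ w = u(vw) + (wu)v$-type expression, and $\ourw = u\circ(vw) - \bil(v,w)u$ rearranges to express $u(vw)$, so $(uv)w + w(uv)$ should come out to $\bil(u,v)w$ modulo a term involving $\ourw$ — but $\ourw$ is \emph{central}, not zero, so this suggests $z = w$ is \emph{not} the right choice and instead the Cayley–Dickson generator should be something like $z$ with $z^2$ picking up $\ourw^2$. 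The right move is to set $z$ so that $z^2 = \ourw^2 + d_0\varphi(w)$: a natural candidate is $z = [u,v]w$ up to normalization, or $z = \ourw$ corrected by $w$-terms; I would compute $([u,v]w)^2$ using \eqref{uv2} ($[u,v]^2 = -d_0$) together with the anticommuting of $R_w$ with $R_u, R_v$ (\Cref{RR}, since $u\circ w = v\circ w = 0$), obtaining $([u,v]w)^2 = $ a scalar, and separately identify $\ourw$ with $[u,v]w$ or a multiple. Indeed $\delta(u,v,w) = u(vw) - (wv)u$; when $u\perp v$ this is $u(vw) + (vw)u = u\circ(vw)$, and since $R_v, R_w$ anticommute this should simplify to $[u,v]w$ up to sign, giving $\ourw = [u,v]w$ (possibly up to a scalar from normalization), hence $\ourw^2 = [u,v]^2 w^2 \cdot(\pm 1) = \mp d_0\varphi(w)$... which would make $\gamma = 0$, a contradiction — so the non-orthogonal cross term is essential and $\ourw^2 + d_0\varphi(w)$ is precisely the anomaly measuring noncommutativity.

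So the real heart of the proof is the identity $\ourw^2 = \gamma - d_0\varphi(w)$ \emph{fails trivially} and instead one must show $\ourw$ is transcendental over $F$, i.e. $F[\ourw]$ is a genuine polynomial ring, and simultaneously that $z$ (appropriately chosen, e.g. $z = w$ after adjoining $\ourw$, or $z$ a specific octonion-type element) satisfies \eqref{za} and $z^2 = \gamma$. I would: (i) fix the presentation of $\Calt(V,\varphi)$ by generators and relations; (ii) exhibit the homomorphism from $\Calt(V,\varphi)$ onto $(Q,\gamma)_{F[t]}$ sending $u,v\mapsto$ the quaternion generators, $w\mapsto$ the Cayley–Dickson generator $z$, and $\ourw\mapsto t$ — checking this respects all the defining quadratic relations $(u_1x_1+u_2x_2+u_3x_3)^2 = \varphi(u_1,u_2,u_3)$, which reduces to finitely many identities in the octonion algebra; (iii) construct the inverse map (or show surjectivity-plus-dimension-count) by verifying that $\Calt(V,\varphi)$ is generated over $F[\ourw]$ by $Q$ and $w$, with the octonion relations \eqref{za} holding — this uses \Pref{herecent}, \Cref{RR}, and the Moufang identities \eqref{ao(abc)}, \eqref{(a,aob,c)}; (iv) conclude via \Rref{fadicha2} once $\gamma = \ourw^2 + d_0\varphi(w) \neq 0$ is established, which follows if $\ourw$ is transcendental (so $F[\ourw]$ is a domain and $\gamma$, a nonconstant polynomial in $\ourw$, is nonzero). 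The main obstacle is step (iii)–(iv): proving the relation \eqref{za} for $z=w$ against all of $Q$ — in particular the $uv$ component, which is where the central element $\ourw$ surfaces and where one must show it is \emph{not} forced into $F$ — together with confirming no further relations collapse the algebra, i.e. that the natural surjection $\Calt(V,\varphi)\twoheadrightarrow (Q,\gamma)_{F[\ourw]}$ is injective. I expect this to require showing $\Calt(V,\varphi)$ is spanned as an $F[\ourw]$-module by the eight elements $1,u,v,uv,w,uw,vw,(uv)w$, which is the genuinely nontrivial combinatorial-algebraic step and likely the one the referee's ``significant shortcut'' addresses.
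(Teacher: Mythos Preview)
Your overall architecture is right --- use \Rref{fadicha2}, find an element $z$ satisfying \eqref{za} with $z^2 \in F[\ourw]$, and for transcendence of $\ourw$ build an explicit map to an octonion ring over $F[\lambda]$ --- but you never locate the correct Cayley--Dickson generator, and this is the one genuine idea the proof needs. Neither $z=w$ nor $z=[u,v]w$ works; the paper takes
\[
z \;=\; [w,uv] \;=\; -(u,v,w),
\]
the associator. With this choice, \eqref{za} is almost automatic from the identity $a\circ(a,b,c) = (a^2,b,c)$: for $a=u$ or $a=v$ the right side vanishes because $u^2,v^2\in F$, and $z\circ(uv) = [w,(uv)^2] = \bil(u,v)z = \tr(uv)z$. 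Your trial computation ``$\ourw = [u,v]w$ hence $\ourw^2 = -d_0\varphi(w)$'' fails for two reasons: $([u,v]w)^2 \neq [u,v]^2 w^2$ in an alternative algebra, and in fact $[u,v]w = \ourw - z$, so the discrepancy you were sensing \emph{is} the associator. The heart of the paper's proof is then the direct calculation $z^2 = \ourw^2 + d_0\varphi(w)$, done by writing $z+\ourw = 2w(uv) - \bil(u,v)w$ and expanding.

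A second point you leave vague is how to recover $w$ from $u,v,\ourw,z$ (needed both to show $Q$ and $z$ generate, and to write down the inverse map proving $\ourw$ transcendental). The paper computes $(z+\ourw)[u,v] = -d_0 w$, so $w = -d_0^{-1}(z+\ourw)[u,v]$. This formula is what lets you define $w' = -d_0^{-1}(z'+\lambda)[u,v]$ in the concrete octonion ring $(Q,\lambda^2+d_0\varphi(w))_{F[\lambda]}$ and check that $u,v,w'$ satisfy the Clifford relations, giving the surjection $\Calt(\varphi)\to(Q,\lambda^2+d_0\varphi(w))_{F[\lambda]}$ that establishes both transcendence and injectivity in one stroke --- no spanning argument over $F[\ourw]$ is needed.
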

\begin{rem}
The sum $\ourw^2+d_0 \varphi(w)$ is well defined up to squares. Indeed, in characteristic not $2$, the product $d_0 \varphi(w)$ is the determinant of $\varphi$, which is well defined up to squares. In characteristic $2$,  $d_0$ is a square and $\varphi(w)$ is well defined up to squares.
\end{rem}
\begin{proof}[Proof of \Tref{Main+}]
Let $z = [w, uv]$. First we note that by \Cref{RR},
$$(u,v,w) + [w,uv] =  - u(vw) + w(uv) = -u(w \circ v) = 0,$$
so that $z = -(u,v,w)$ vanishes in the associative Clifford algebra $\Cass(V,\varphi)$.
Next, we claim that $\Calt(V,\varphi)$ is generated over $F$ by $u,v,\ourw$ and $z$.
Indeed,
$$z + \ourw 
 = 2w(uv) - (u \circ v)w = 2w(uv) - \bil(u,v)w,$$
and therefore by \eq{uv2}
$$
(z+\ourw)[u,v] = (2 w(uv)-w\bil(u,v))[u,v]=w[u,v]^2 = -d_0 w;
$$
so $w \in F[\ourw,u,v,z]$.
In other words, over $F[\ourw]$, $\Calt(V,\varphi)$ is generated by $Q = F[u,v]$ and $z$.
We now show that \eq{za} holds for $a \in Q$. It suffices to prove this claim for $a = u,v,uv$, and indeed, by \eq{ao(abc)},
$$z \circ u = -(u,v,w) \circ u = -(u^2,v,w) = 0;$$
$$z \circ v = (v,u,w) \circ v = (v^2,u,w) = 0;$$
and
$$z \circ (uv) = [w,uv] \circ (uv) = [w, (uv)^2] = [w, \bil(u,v)uv - u^2v^2]  = \bil(u,v)z$$
which is equal to $\tr(uv)z$ because $\tr(uv) = uv + vu = u \circ v = \bil(u,v)$.

As a final preparation, we have that $z \circ w = [w^2,uv] = 0$ and
\begin{equation}\label{xx6}
w(uv) - \ourw = w(uv) - \delta(w,u,v) = (vu)w = \bil(u,v)w - (uv)w.
\end{equation}
Now let us compute
\begin{eqnarray*}
z^2+\bil(u,v)^2\varphi(w) & =& ([w,uv]+\bil(u,v)w)^2\\
& = & (2w(uv)-\ourw)^2\\
& = & 4(w(uv))^2-4\ourw(w(uv))+\ourw^2\\
& \stackrel{\eq{xx6}}{=}& \ourw^2-4((uv)w)(w(uv))+4\bil(u,v)\varphi(w)uv\\
& =& \ourw^2-4\varphi(w)(uv)^2+4\bil(u,v)\varphi(w)uv \\
& = & \ourw^2+4\varphi(u)\varphi(v)\varphi(w);
\end{eqnarray*}
so that $z^2 = \ourw^2 + d_0\varphi(w)$. By \Rref{fadicha2} $\Calt(V,\varphi) \isom (Q,z^2)_{F[\ourw]}$.
(Note that by \cite[Chapter 13, Theorem~12]{ZSSS}, the square of any associator in an alternative algebra generated by three elements is central, so $z^2 \in F[\ourw]$ was expected).

To show that $\ourw$ is transcendental over~$F$, reverse the argument. The octonion ring $C' = (Q,\lam^2+d_0 \varphi(w))_{F[\lam]}$ is generated by a copy of $Q$ and an element $z'$ satisfying \eq{za} and ${z'}^2 = \lam^2+d_0 \varphi(w)$. Let $w' = -\frac{1}{d_0}(z'+\lam)[u,v]$. Then $(t_1u+t_2v+t_3w')^2 = \varphi(t_1 u+t_2 v+ t_3w')$ because $w' \circ u = w' \circ v = 0$, showing that the map $\Calt(\varphi) \ra C'$ preserving $V = Fu+Fv+Fw$ and defined by $z \mapsto z'$ is well defined, and being onto, it is an isomorphism.
\end{proof}

\begin{cor}
The subring $F[\ourw]$ is both the center and the nucleus of $\Calt(\varphi)$.
\end{cor}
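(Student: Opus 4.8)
The plan is to leverage the octonion structure established in \Tref{Main+} together with the general fact that in an octonion algebra $(Q,\gamma)_C$ over an integral domain $C$, the center and the nucleus both coincide with $C$. First I would note that $F[\ourw] \subseteq \Zent(\Calt(\varphi))$: we already know from \Pref{herecent} that $\ourw$ is central, and $F$ is central by definition, so the subring they generate is central; similarly $F[\ourw] \subseteq N(\Calt(\varphi))$ since the nucleus is a subring containing the center. So the content is the reverse inclusions: $\Zent(\Calt(\varphi)) \subseteq F[\ourw]$ and $N(\Calt(\varphi)) \subseteq F[\ourw]$. Since $\Zent \subseteq N$, it suffices to prove $N(\Calt(\varphi)) \subseteq F[\ourw]$.

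Next I would make the octonion description concrete. By \Tref{Main+}, $R := \Calt(\varphi) = (Q,\gamma)_{C}$ with $C = F[\ourw]$ and $\gamma = \ourw^2 + d_0\varphi(w) \in C$. As a $C$-module, $R = Q \oplus Qz$ where $Q$ is the $C$-algebra $Q_0 \tensor[F] C$ for the quaternion algebra $Q_0 = F[u,v]$, and multiplication follows the Cayley--Dickson rule; in particular $R$ is a free $C$-module of rank $8$. An element $r \in R$ lies in $N(R)$ iff it associates with everything; I would show this forces $r \in C$. The cleanest route is to use the standard fact (\cite[Chapter 7]{ZSSS}, or a direct Cayley--Dickson computation) that for an octonion \emph{algebra} over a field, the nucleus equals the center equals the base field, and then reduce to that case by localizing. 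Concretely: if $r \in N(R)$, then its image in the octonion algebra $R' := (C - \set 0)^{-1}R = (Q_0, \gamma)_{K}$ over the field $K = \Frac{C} = F(\ourw)$ lies in $N(R') = K$. So $r \in R \cap K$. It remains to check $R \cap K = C$, i.e. that $C$ is integrally closed in $K$ inside $R$ — but this is immediate since $R$ is free over $C = F[\ourw]$, which is a polynomial ring, hence integrally closed (and in fact $R \cap K = C$ because the $C$-module decomposition $R = Q \oplus Qz$ with $1$ a basis element of the free rank-$4$ module $Q$ over $C$ shows $R \cap C\cdot 1 = C$, and $K \cap R \subseteq K \cap (Q \oplus Qz)$; writing an element of $K$ in lowest terms and clearing denominators against the $C$-basis of $R$ forces the denominator to be a unit).

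The main obstacle, such as it is, is being careful about the characteristic-$2$ case and about the passage from the ring $R$ to its central localization $R'$: one must make sure that $\Frac{F[\ourw]} = F(\ourw)$ is genuinely the center of the localized octonion algebra — but this is exactly the content of the classical structure theorem for octonion algebras over a field (\cite[Chapter 7, Section 3]{ZSSS}), which applies uniformly in all characteristics, so no extra work is needed. I would also double-check that the localization functor commutes with taking the nucleus for a module-finite algebra over a Noetherian domain, which it does: $N((C-\set 0)^{-1}R) = (C - \set 0)^{-1} N(R)$, since the associator is $C$-trilinear and $R$ is finitely presented over $C$. Putting these together: $N(R) \subseteq R \cap N(R') = R \cap F(\ourw) = F[\ourw]$, giving $N(R) = F[\ourw]$, and a fortiori $\Zent(R) = F[\ourw]$ as well, which is the statement.
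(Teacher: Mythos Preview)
Your argument is correct and is exactly the natural unpacking of what the paper leaves implicit: the corollary is stated without proof, immediately after \Tref{Main+}, the intended reasoning being that an octonion algebra over a field has nucleus equal to center equal to the base field, and that the freeness of $\Calt(\varphi)$ as a rank-$8$ module over $F[\ourw]$ lets one pull this back from the central localization. Your check that $R \cap K = C$ via the explicit $C$-basis is the right way to make that pullback precise.
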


\begin{cor}\label{mainFnot2}
Let $F$ be a field of characteristic not $2$. A nondegenerate ternary form can be presented diagonally as $\varphi = \Qf{\alpha_1,\alpha_2,\alpha_3}$, and then $Q = (\alpha_1,\alpha_2)_F$ and
$$\Calt(\varphi) \isom (\alpha_1, \alpha_2, \ \ourw^2+ 4 \alpha_1\alpha_2\alpha_3)_{F[\ourw]}.$$
\end{cor}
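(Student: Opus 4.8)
The statement to prove is Corollary~\ref{mainFnot2}, which specializes Theorem~\ref{Main+} to the case $\mychar F \neq 2$ and a diagonalized ternary form. Let me sketch a proof plan.

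=== PROOF PLAN ===

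\begin{proof}[Proof plan for \Cref{mainFnot2}]
The plan is to simply unwind \Tref{Main+} for the concrete diagonal presentation. First I would recall the standard fact that over a field of characteristic not~$2$ every nondegenerate quadratic form, in particular a ternary one, can be diagonalized; write $\varphi = \Qf{\alpha_1,\alpha_2,\alpha_3}$ with respect to an orthogonal basis $\set{e_1,e_2,e_3}$, so $\varphi(e_i) = \alpha_i$ and $\bil(e_i,e_j) = 0$ for $i \neq j$, with all $\alpha_i \neq 0$.

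Next I would specialize the setup of \Sref{sec:main} to this basis: take $V_0 = Fe_1 \perp Fe_2$, $u = e_1$, $v = e_2$, $w = e_3$. Since $\bil(u,v) = 0$, the defining relations of $Q = \Calt(V_0,\varphi|_{V_0})$ become $u^2 = \alpha_1$, $v^2 = \alpha_2$, and $u \circ v = 0$, i.e.\ $uv = -vu$; this is precisely the presentation of the quaternion algebra $(\alpha_1,\alpha_2)_F$, giving the first assertion. The determinant $d_0 = 4\varphi(u)\varphi(v) - \bil(u,v)^2$ collapses to $d_0 = 4\alpha_1\alpha_2$, so the structure constant $\ourw^2 + d_0\varphi(w)$ appearing in \Tref{Main+} becomes $\ourw^2 + 4\alpha_1\alpha_2\alpha_3$. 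Substituting into the conclusion $\Calt(V,\varphi) \isom (Q,\,\ourw^2 + d_0\varphi(w))_{F[\ourw]}$ of \Tref{Main+} yields
$$\Calt(\varphi) \isom (\alpha_1,\alpha_2,\ \ourw^2 + 4\alpha_1\alpha_2\alpha_3)_{F[\ourw]},$$
and \Tref{Main+} also records that $F[\ourw]$ is a polynomial ring, completing the argument.

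There is essentially no obstacle here: the only thing to be careful about is that $\ourw = \delta(u,v,w)$ was defined in \Sref{sec:main} using a basis in which $v,w$ are orthogonal (which holds for our chosen basis since $\bil(e_2,e_3) = 0$), and that the Proposition preceding \Pref{herecent} guarantees the one-dimensional space $F\ourw$ is independent of the basis choice, so the element is canonical up to scalar; since the structure constant $\ourw^2 + 4\alpha_1\alpha_2\alpha_3$ is only well defined up to squares anyway (as noted in the Remark following \Tref{Main+}), rescaling $\ourw$ causes no harm. One may also remark that $4\alpha_1\alpha_2\alpha_3$ is, up to squares, the discriminant $\disc(\varphi)$, so the octonion structure constant is $\ourw^2 + \disc(\varphi)$.
\end{proof}
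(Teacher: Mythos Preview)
Your proposal is correct and matches the paper's approach exactly: the paper states \Cref{mainFnot2} as an immediate corollary of \Tref{Main+} with no further proof, and your write-up simply fills in the routine specialization (orthogonal basis, $Q=(\alpha_1,\alpha_2)_F$, $d_0=4\alpha_1\alpha_2$). There is nothing to add.
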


\begin{cor}\label{mainF2}
Let $F$ be a field of characteristic~$2$. A nondegenerate ternary form can be presented as $\varphi = [\alpha_1,\alpha_2] \perp \Qf{\alpha_3}$ (so that
$\varphi(t_1,t_2,t_3) = \alpha_1 t_1^2 + t_1 t_2 + \alpha_2 t_2^2 + \alpha_3 t_3^2$)
and then $Q = [\alpha_1\alpha_2,\alpha_2)_F$, $d_0 = 1$, and
$$\Calt(\varphi) \isom [\alpha_1\alpha_2, \alpha_2, \ \ourw^2+ \alpha_3)_{F[\ourw]},$$
following the notation of \cite{ElduqueVilla:2005} for Hurwitz algebras in characteristic 2.
\end{cor}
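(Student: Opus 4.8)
The plan is to derive this from \Tref{Main+} by choosing a basis adapted to characteristic~$2$. Recall the standard classification: a nondegenerate ternary quadratic form over a field of characteristic~$2$ splits as $\varphi=[\alpha_1,\alpha_2]\perp\Qf{\alpha_3}$, where the last coordinate line $Fw$ is the radical of the polar form and $\varphi(w)=\alpha_3\neq 0$, while the binary part $[\alpha_1,\alpha_2]$ is nonsingular. In the corresponding basis $u,v,w$ one has $\varphi(u)=\alpha_1$, $\varphi(v)=\alpha_2$, $\varphi(w)=\alpha_3$, and $\bil(u,v)=\varphi(u+v)-\varphi(u)-\varphi(v)=1$, $\bil(u,w)=\bil(v,w)=0$; hence $V=V_0\perp Fw$ with $V_0=Fu+Fv$ a nondegenerate binary space. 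Since $d_0=4\varphi(u)\varphi(v)-\bil(u,v)^2=-1=1$ in characteristic~$2$, \Tref{Main+} immediately gives $\Calt(\varphi)\isom(Q,\ \ourw^2+\alpha_3)_{F[\ourw]}$, with $Q=\Calt(V_0,\varphi|_{V_0})=F[u,v]$ and $F[\ourw]$ a polynomial ring.

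It then remains to identify $Q$ with the Hurwitz quaternion algebra $[\alpha_1\alpha_2,\alpha_2)_F$. The elements $u,v$ generate an associative subalgebra (Artin), and $vu=\bil(u,v)-uv=1+uv$. Hence $e:=uv$ satisfies $e^2=u(1+uv)v=uv+u^2v^2=e+\alpha_1\alpha_2$, that is $e^2+e=\alpha_1\alpha_2$; and $f:=v$ satisfies $f^2=\alpha_2$ and, inverting $f$ (we may assume $\alpha_2\neq 0$, changing the presentation of the binary part if necessary), $fef^{-1}=v(uv)v^{-1}=vu=e+1$. These are precisely the defining relations of $[\alpha_1\alpha_2,\alpha_2)_F$, with reduced trace $\tr(e)=e+(e+1)=1=\bil(u,v)$, consistent with the trace used in the proof of \Tref{Main+}.

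Finally, substituting $Q\isom[\alpha_1\alpha_2,\alpha_2)_F$ into $\Calt(\varphi)\isom(Q,\ \ourw^2+\alpha_3)_{F[\ourw]}$, and using that the Cayley--Dickson double of $[a,b)_C$ with parameter $c$ is by definition the Hurwitz octonion algebra $[a,b,c)_C$ in the notation of \cite{ElduqueVilla:2005}, one obtains $\Calt(\varphi)\isom[\alpha_1\alpha_2,\alpha_2,\ \ourw^2+\alpha_3)_{F[\ourw]}$. I expect no genuine obstacle here: the content is entirely contained in \Tref{Main+}, and the remaining work is the bookkeeping of matching our symbols to those of \cite{ElduqueVilla:2005} and observing, as in the remark after \Tref{Main+}, that the doubling parameter $\ourw^2+\alpha_3$ is well defined only up to squares --- consistent with $d_0=1$ being a square and $\varphi(w)=\alpha_3$ being determined up to squares by the isometry class.
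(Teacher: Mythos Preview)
Your proposal is correct and follows exactly the approach implicit in the paper, which states this result as an immediate corollary of \Tref{Main+} with no separate proof. You have correctly supplied the routine verifications: the choice of basis, the computation $d_0=1$, the identification $Q\cong[\alpha_1\alpha_2,\alpha_2)_F$ via $e=uv$ and $f=v$, and the match with the Cayley--Dickson notation of \cite{ElduqueVilla:2005}.
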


\section{Simple quotients of the alternative Clifford algebra}\label{S7}

Recall that the associative Clifford algebra is either simple or the direct sum of two isomorphic quaternion algebras over~$F$, depending on the discriminant.
We now describe the simple quotients of $\Calt(\varphi)$, following the notation of \Tref{Main+}.
\begin{prop}\label{simplequot}
Let $\bar{F}$ be the algebraic closure of $F$. The simple quotients of $\Calt(\varphi)$ are \begin{enumerate}\item The octonion algebras $(Q,\theta^2 + d_0\varphi(w))_{F[\theta]}$ for every $\theta \in \bar{F}$ such that $\theta^2  + d_0 \varphi(w) \neq 0$;
\item The (simple quotients of the) associative Clifford algebra.
\end{enumerate}
\end{prop}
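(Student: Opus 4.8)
The plan is to pass to the central ring $R = \Calt(\varphi) = (Q, \ourw^2 + d_0\varphi(w))_{F[\ourw]}$ and exploit the fact that $F[\ourw]$ is a polynomial ring, so that its prime (hence maximal) ideals that meet $Q$ trivially are in bijection with the monic irreducibles of $F[\ourw]$. A simple quotient of $R$ is a quotient by a maximal two-sided ideal $M$. Since $R$ is a faithful $F[\ourw]$-algebra and finitely generated as a module over the Noetherian ring $F[\ourw]$, the ideal $M$ contracts to a prime ideal $\mathfrak{p} = M \cap F[\ourw]$, and $R/M$ is a simple alternative algebra over the field $k = \Frac(F[\ourw]/\mathfrak{p})$. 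First I would treat the generic case $\mathfrak{p} = 0$: then $R/M$ is a simple quotient of the octonion \emph{algebra} $(Q \tensor[F] k', \ourw^2+d_0\varphi(w))_{k'}$ where $k' = \Frac(F[\ourw]) = F(\ourw)$; an octonion algebra is simple, so the only such quotient is the algebra itself, which is not finite-dimensional over $F$ but is a legitimate simple quotient only if one allows it — however, since we want \emph{all} simple quotients and $\ourw$ is transcendental, this generic fiber does occur, so I should double-check whether the intended statement restricts to finite-dimensional quotients; assuming it does (or assuming $\mathfrak p \neq 0$, equivalently $M$ is "finite"), every simple quotient lies over a maximal ideal $\mathfrak{p} = (f(\ourw))$ for a monic irreducible $f \in F[\ourw]$.

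Fixing such an $f$, set $k_f = F[\ourw]/(f)$, a finite field extension of $F$, and let $\theta \in \bar F$ be a root of $f$, so $k_f \isom F(\theta)$. Then $R \tensor[F[\ourw]] k_f = (Q \tensor[F] k_f, \theta^2 + d_0\varphi(w))_{k_f}$. Two subcases: if $\theta^2 + d_0\varphi(w) \neq 0$, this is an octonion algebra over $k_f$, hence already simple, so it contributes exactly the quotient in item (1) (the Galois conjugates of $\theta$ give isomorphic algebras, and restriction of scalars identifies $(Q,\theta^2+d_0\varphi(w))_{F[\theta]}$ with the quotient over $k_f$). If instead $\theta^2 + d_0\varphi(w) = 0$ — which can happen only when $-d_0\varphi(w)$ is a square in $\bar F$, i.e. always in characteristic $\neq 2$ via $\theta = \pm\sqrt{-d_0\varphi(w)}$, and via $\theta^2 = \alpha_3$ in characteristic $2$ using \Cref{mainF2} — then the Cayley–Dickson double $(Q\tensor k_f, 0)$ is the split-by-a-zero-norm construction, which is not simple: the element $z'$ with ${z'}^2 = 0$ generates a proper ideal, and modding it out returns exactly $Q \tensor[F] k_f$, i.e. a specialization of the associative Clifford algebra. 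Comparing with the known structure of $\Cass(\varphi)$ — simple of dimension $8$ when $\disc(\varphi)$ is a nonsquare, or $Q \oplus Q$ when $\disc(\varphi)$ is a square — one checks these are precisely the quotients in item (2), and conversely every simple quotient of $\Cass(\varphi)$ arises this way because $\Cass(\varphi)$ is itself a quotient of $\Calt(\varphi)$ via $z \mapsto 0$ (shown in the proof of \Tref{Main+}), i.e. via the specialization $\ourw \mapsto \theta$ with $\theta^2 = -d_0\varphi(w)$.

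The main obstacle is bookkeeping the correspondence between maximal two-sided ideals of the octonion \emph{ring} $R$ and the data $(f, \text{choice of root }\theta)$, and in particular verifying that the Cayley–Dickson double over a field with \emph{zero} parameter has the associative algebra $Q\tensor k_f$ as its unique proper simple quotient — this is the one place where "octonion" breaks down and one must argue directly with the ideal generated by the norm-zero element $z'$. A secondary subtlety is the characteristic $2$ case: here $d_0$ is a square (equal to $1$ in the normal form of \Cref{mainF2}), so $\theta^2 + d_0\varphi(w) = \theta^2 + \alpha_3$, and the degenerate fiber is $\theta^2 = \alpha_3$; one must confirm that the resulting quotient $Q \tensor[F] F(\sqrt{\alpha_3})$ indeed matches $\Cass(\varphi)$ in that characteristic, using the description of $\disc$ in the remark following \Tref{Main+}. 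Everything else is a routine application of the fact that an octonion algebra over a field is simple together with the universal property used to build $\Cass(\varphi)$ from $\Calt(\varphi)$.
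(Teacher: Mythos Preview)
Your approach is essentially the paper's: map $\ourw \mapsto \theta$, split on whether $\theta^2 + d_0\varphi(w)$ vanishes, and in the degenerate case kill $z$ to land in $Q$ over the residue field. Two remarks. First, your worry about the generic fibre $\mathfrak p = (0)$ is unfounded and the paper does not mention it: since $R$ is a finitely generated module (of rank~$8$) over $F[\ourw]$, the simple quotient $R/M$ is a finitely generated module over the domain $F[\ourw]/\mathfrak p$, and as $R/M$ contains the field $\Zent(R/M)$, that domain must already be a field---so $\mathfrak p$ is automatically maximal. Second, the paper disposes of the degenerate case more cleanly than your Cayley--Dickson-with-zero-parameter analysis: once $z^2 \mapsto 0$, the image of $Qz$ is a nilpotent ideal (the radical) in the quotient, so simplicity forces $z \mapsto 0$ directly, and the resulting $Q_{F[\theta]}$ is identified with $\Cass(\varphi)$ when $\theta \notin F$; no separate characteristic~$2$ bookkeeping is needed.
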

\begin{proof}
The image of the center in a simple quotient is a field extension of~$F$. Suppose $\ourw \mapsto \theta$ for $\theta \in \bar{F}$.
If $\theta^2+d_0 \varphi(w) \neq 0$ then the image is the octonion algebra $(Q,\theta^2+d_0 \varphi(w))_{F[\theta]}$. Otherwise, $z^2$ maps to zero, making the image of $Qz$ equal to the radical in the quotient, so by simplicity $z$ maps to zero, and the whole algebra maps to the quaternion algebra $Q_{F[\theta]}$.
 If $\theta \not \in F$, this is the associative Clifford algebra.
\end{proof}

For example, in characteristic not $2$, mapping $\ourw \mapsto 0$ we obtain a projection onto the octonion algebra $\Calt(\Qf{\alpha_1,\alpha_2,\alpha_3}) \ra (\alpha_1,\alpha_2,\alpha_3)_F$. (This was observed in \cite[Prop.~2.17]{Musgrave:2015}.)

\forget
\begin{rem}
The octonion algebra is endowed with a natural involution $v \mapsto -v$ for $v \in V$, inducing the symplectic involution of $Q$. Under this involution $\ourw$ is symmetric, and~$z$~is antisymmetric. Since $x_1x_2$ is antisymmetric, we get that $x_3$ is antisymmetric as well. This involution of $\Calt(V,\varphi)$, which can be defined by $\bar{v} = - v$ for all $v \in V$, projects to the symplectic involution of the associative Clifford algebra $\Cass(V,\varphi)$.
\end{rem}
\forgotten

\medskip

\begin{cor}\label{localize}
The localization of $\Calt(\varphi)$ at the central element $(u,v,w)^2  = \ourw^2+d_0 \varphi(w)$ is ``alternative Azumaya'' in the sense that its simple quotients are all octonion algebras.
\end{cor}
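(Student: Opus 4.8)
The plan is to show that once we invert the central element $\gamma := \ourw^2 + d_0\varphi(w) = (u,v,w)^2$, every simple quotient of the resulting ring falls under case (1) of \Pref{simplequot} and none under case (2). By \Tref{Main+} we have $\Calt(\varphi) \isom (Q, \gamma)_{F[\ourw]}$, so the localization at $\gamma$ is $(Q,\gamma)_{F[\ourw][\gamma^{-1}]}$, a faithful module over the localized polynomial ring $C' := F[\ourw]_\gamma$. First I would record that $C'$ is still an integral domain (a localization of the polynomial ring $F[\ourw]$ at the nonzero element $\ourw^2 + d_0\varphi(w)$), so the Cayley--Dickson data still make sense and the ring is still an octonion ring in the sense defined just before \Rref{fadicha2}.

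Next I would analyze the simple quotients directly. A simple quotient of an octonion ring over the domain $C'$ is obtained by killing a maximal ideal $\mathfrak{m}$ of $C'$; the image of the center is the field $C'/\mathfrak{m}$, which corresponds to some $\theta \in \bar F$ with $\ourw \mapsto \theta$. Because we have inverted $\gamma$, the element $\ourw^2 + d_0\varphi(w)$ is a unit in $C'$, hence its image $\theta^2 + d_0\varphi(w)$ in $C'/\mathfrak{m}$ is nonzero. This is exactly the hypothesis of case (1) of \Pref{simplequot}, so the quotient is the octonion algebra $(Q, \theta^2 + d_0\varphi(w))_{F[\theta]}$, which is genuinely an octonion algebra precisely because the scalar $\theta^2 + d_0\varphi(w) \neq 0$ guarantees the Cayley--Dickson double of the quaternion algebra $Q_{F[\theta]}$ is a division-or-split octonion algebra rather than degenerating. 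The possibility (2) — a quaternion quotient coming from $z \mapsto 0$ — is excluded exactly because it required $\theta^2 + d_0\varphi(w) = 0$, which can no longer happen.

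The only mild subtlety, and the step I would be most careful with, is making sure that \textit{every} simple quotient of the localization indeed arises by reduction modulo a maximal ideal of $C'$ with $\ourw$ going to an element of $\bar F$ — i.e.\ that the argument of \Pref{simplequot} (the image of the center in a simple quotient is a field generated over $F$ by the image of $\ourw$, hence a subfield of $\bar F$) transfers verbatim to the localized ring. This is immediate since $C' \subseteq \Zent(\Calt(\varphi)_\gamma)$ is a domain with fraction field $F(\ourw)$ and any field quotient of $C'$ is algebraic of degree one over... no: one must note the image of $\ourw$ need not be algebraic a priori, but a \emph{simple} quotient of an octonion ring over a domain is finite-dimensional over the image of its center, and that image is a field; killing the kernel on $C'$ gives a quotient field of $C'$, which is either $F(\ourw)$ itself or $C'/\mathfrak m$ for a maximal ideal $\mathfrak m$. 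In the first case the quotient is the octonion algebra $(Q,\gamma)_{F(\ourw)}$, still an octonion algebra; in the second case we are in the situation above. Either way the quotient is an octonion algebra, which is the assertion. I would therefore present the corollary as an immediate consequence of \Pref{simplequot} together with the observation that inverting $\gamma$ removes precisely the quotients in class (2).
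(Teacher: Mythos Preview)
Your argument is correct and matches the paper's approach: the corollary is stated there without proof, as an immediate consequence of \Pref{simplequot}, the point being that inverting $(u,v,w)^2 = \ourw^2+d_0\varphi(w)$ forces its image in any quotient to be nonzero, which rules out case~(2) and leaves only the octonion quotients of case~(1). Your final paragraph's worry about a possible ``transcendental'' simple quotient is unnecessary (that case cannot arise for a surjection, since the quotient is a finitely generated $C'$-module) but harmless, since your conclusion there is still ``octonion algebra''.
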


On the other hand, let $\sg{z}$ denote the ideal of $\Calt(\varphi)$ generated by the associator $z = (u,v,w)$.
\begin{rem}\label{a=0}
The algebra $\Calt(\varphi)/\sg{z}$ is associative.

\end{rem}
This is a consequence of Moufang's theorem \cite[Appendix]{Smiley}, and coincides with the case $\ourw^2 + d_0 \varphi(w) \mapsto 0$ of \Pref{simplequot}.

\section{Central fractions of the alternative Clifford algebra}\label{sec:ci}

For a nondegenerate ternary quadratic form $\varphi$,
let $\Caltf(\varphi)$ denote the algebra of central fractions of the alternative Clifford algebra $\Calt(\varphi)$, namely $\Caltf(\varphi) = F(\ourw) \tensor_{F[\ourw]} \Calt(\varphi)$ where $\ourw = \delta(u,v,w)$ for some basis $u,v,w$ of~$V$. More explicitly, assuming in this section that $\mychar F \neq 2$ and writing $\varphi = \Qf{\alpha_1,\alpha_2,\alpha_3}$, we have by \Cref{mainFnot2} that
$$\Caltf(\varphi) \isom (\alpha_1,\alpha_2,\lam^2+4\abc)_{F(\lam)}.$$

Assuming in this section that $\mychar F \neq 2$, we now put this algebra in context, by comparing it to the cohomological invariants of quadratic forms.

\subsection{Cohomological invariants}

Recall from \cite[Theorem~17.3]{GMS} that the cohomological invariants over~$F$ of ternary quadratic forms are freely spanned over the cohomology ring~$\H{*}{F,\mu_2}$ by the Steifel-Whitney invariants $\omega_i \in \H{i}{F,\mu_2}$, $i = 0,\dots,3$, where for $\varphi = \Qf{\alpha_1,\alpha_2,\alpha_3}$ the invariants are defined by $\omega_0(\varphi) = 1$, $\omega_1(\varphi) = (\abc)$, $\omega_2(\varphi) = (\alpha_1,\alpha_2)+(\alpha_1,\alpha_3)+(\alpha_1,\alpha_3)$ and $\omega_3(\varphi) = (\alpha_1,\alpha_2,\alpha_3)$. For example, the associative Clifford algebra $\Cass(\varphi)$ is the quaternion algebra $(\alpha_i,\alpha_j)$ over~$F[\sqrt{\disc(\varphi)}]$ for any $i \neq j$. Summing up in $\H{2}{F,\mu_2}$, this invariant corresponds to $\res_{F[\sqrt{-\omega_1(\varphi)}]/F}(\omega_2)$. Computation shows that $\omega_1 \cup \omega_2 = \omega_3 + (-1)\cup \omega_2$.

\subsection{The associated Pfister form}\label{ss:Pfister}

Recall that an $m$-fold Pfister form is a tensor product $\Pf{\gamma_1,\dots,\gamma_m} = \Pf{\gamma_1}\tensor \cdots \tensor \Pf{\gamma_m}$, where
$\Pf{\gamma} = \Qf{1,-\gamma}$. If $\pi$ is a Pfister form, $\pi'$ denotes the pure subform.
The norm form of $\Caltf(\Qf{\alpha_1,\alpha_2,\alpha_3})$ is the Pfister form $\Pf{\alpha_1,\,\alpha_2,\,\lam^2+4\abc}$ over~$F(\lam)$, corresponding to the symbol
$$\mu(\Qf{\alpha_1,\alpha_2,\alpha_3}) = (\alpha_1,\alpha_2,\lam^2 + 4 \abc)$$
in $\H{3}{F(\lam),\mu_2}$. This element, the octonion algebra over $F(\lam)$ and its norm form, determine each other up to isomorphism.

By symmetry we may write $\mu(\varphi) = (\alpha_i,\alpha_j,\lam^2+4\abc)$ for any $i \neq j$. Summing up the three presentations,
\begin{cor}
We have that $\mu(\varphi) = 3 \mu(\varphi) = (\lam^2+4\abc)\cup \omega_2(\varphi)$.
\end{cor}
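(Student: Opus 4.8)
The plan is to deduce this corollary from just two facts: that $\H{3}{F(\lam),\mu_2}$ is $2$-torsion, and that the cup product is bilinear. The first equality is then immediate: $3\mu(\varphi) = \mu(\varphi) + 2\mu(\varphi) = \mu(\varphi)$ in a group annihilated by $2$, so nothing needs to be done there beyond recording it.

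For the second equality I would start from the three presentations $\mu(\varphi) = (\alpha_i,\alpha_j,\lam^2+4\abc)$, $i \neq j$, whose equality is exactly the ``by symmetry'' observation made just above the statement: diagonalizing $\varphi = \Qf{\alpha_1,\alpha_2,\alpha_3}$ and taking the nondegenerate plane $V_0$ to be spanned by the $i$-th and $j$-th basis vectors, \Cref{mainFnot2} gives $Q = (\alpha_i,\alpha_j)_F$ while $d_0\varphi(w) = 4\alpha_i\alpha_j\alpha_k = 4\abc$ independently of the chosen pair, so all three symbols coincide with $\mu(\varphi)$. Writing the symbol $(\alpha_i,\alpha_j,\lam^2+4\abc)$ as the cup product $(\alpha_i,\alpha_j)\cup(\lam^2+4\abc)$ and adding the three presentations, bilinearity of $\cup$ in $\H{*}{F(\lam),\mu_2}$ yields
\[
3\mu(\varphi) \;=\; \bigl[(\alpha_1,\alpha_2)+(\alpha_1,\alpha_3)+(\alpha_2,\alpha_3)\bigr]\cup(\lam^2+4\abc) \;=\; \omega_2(\varphi)\cup(\lam^2+4\abc),
\]
using the definition of $\omega_2(\varphi)$ recalled above.

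I do not expect a genuine obstacle here; the corollary is a formal consequence of $2$-torsion together with additivity of symbols. The only point that deserves a word of care is the justification that the three symbols $(\alpha_i,\alpha_j,\lam^2+4\abc)$ are all equal to the single class $\mu(\varphi)$ — equivalently, that the octonion ring of \Tref{Main+} does not depend on the choice of plane $V_0 \subseteq V$ — and this has already been noted, so the proof reduces to the one-line computation above.
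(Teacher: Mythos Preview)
Your proposal is correct and follows exactly the paper's own argument: the sentence preceding the corollary already records the three presentations $\mu(\varphi)=(\alpha_i,\alpha_j,\lam^2+4\abc)$, and the paper's ``Summing up the three presentations'' is precisely your bilinearity-plus-$2$-torsion computation. Your write-up merely makes explicit what the paper leaves as a one-line hint.
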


\subsection{Presentation by Pfister forms}

It is of some interest to present $\mu(\varphi)$ in a form which is visibly symmetric.
Let $\Quad_k$ and $\P{k}$ denote the sets of $k$-dimensional forms and $k$-folds Pfister forms over $F$ up to isomorphism, respectively.
\begin{rem}\label{Q3=Q1P2}
There is a one-to-one correspondence
$$\Quad_3 \longleftrightarrow \Quad_1 \times \P{2}$$
by $\varphi = \Qf{a,b,c} \longmapsto (\Qf{abc},\Pf{-ab,-ac}) = (\Qf{\delta},\omega)$ and $\Qf{\delta}\omega' \longleftmapsto (\Qf{\delta},\omega)$.
Thus~$\varphi$ is isotropic iff $\omega$ is hyperbolic.

\forget 
Let $X \sub \Quad_1 \times \GP{2}$ be defined as $X = \set{(\Qf{\alpha},q) \suchthat \alpha \in D(q)}$.
There are one-to-one correspondences
$$\Quad_3 \longleftrightarrow X \longleftrightarrow \Quad_1 \times \P{2};$$
by
$$\Qf{a,b,c} \longleftrightarrow (\Qf{abc},\Qf{a,b,c,abc}) \longleftrightarrow (\Qf{abc},\Pf{-ab,-ac}),$$$$\Qf{\delta}\omega' \longleftrightarrow (\Qf{\delta},\Qf{\delta}\omega) \longleftrightarrow (\Qf{\delta},\omega).$$

The first map takes $\Qf{a,b,c} \mapsto (\Qf{abc},\Qf{a,b,c,abc})$, and is reversed by mapping $(\Qf{\alpha},q)$ to the orthogonal complement of $\Qf{\alpha}$ in $q$.
The second map is induced by the duality $(\Qf{\alpha},q) \mapsto (\Qf{\alpha},\Qf{\alpha} q)$, using the fact that if $\omega$ is similar to a Pfister form and $1 \in D(\omega)$ then $\omega$ is a Pfister form. In fact the maps can also be described as
\forgotten
\end{rem}

\begin{prop}\label{symmP}
The norm form of $\Caltf(\ourphi)$ is
$$\Pf{\lam^2+4\abc}\tensor \Pf{-\alpha_i\alpha_j,-\alpha_i\alpha_k}.$$
\end{prop}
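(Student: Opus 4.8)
The plan is to identify the norm form of the octonion algebra $\Caltf(\ourphi) \isom (\alpha_1,\alpha_2,\lam^2+4\abc)_{F(\lam)}$ using the standard fact (recalled in \Ssref{ss:Pfister}) that the norm form of the octonion algebra $(\beta_1,\beta_2,\beta_3)_K$ is the Pfister form $\Pf{\beta_1,\beta_2,\beta_3}$ over $K$. Thus the norm form of $\Caltf(\ourphi)$ is $\Pf{\alpha_1,\alpha_2,\lam^2+4\abc}$ over $F(\lam)$, and the content of the proposition is purely the quadratic-form identity
$$\Pf{\alpha_1,\alpha_2,\lam^2+4\abc} \isom \Pf{\lam^2+4\abc}\tensor\Pf{-\alpha_i\alpha_j,-\alpha_i\alpha_k}$$
over $F(\lam)$, for any choice of $\{i,j,k\}=\{1,2,3\}$. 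Since the last slot $\Pf{\lam^2+4\abc}$ is common to both sides, it suffices to show
$$\Pf{\alpha_1,\alpha_2} \isom \Pf{-\alpha_i\alpha_j,-\alpha_i\alpha_k}$$
as $2$-fold Pfister forms over $F$ (this is exactly the symmetry already exploited in writing $\mu(\varphi)=(\alpha_i,\alpha_j,\lam^2+4\abc)$ for all $i\neq j$): a $3$-fold Pfister form with a fixed common slot is determined by the complementary $2$-fold Pfister form, so the $F(\lam)$-isomorphism follows from the $F$-isomorphism by base change and the Pfister-form cancellation/roundness properties.

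The first step is therefore to reduce, by \Cref{mainFnot2} and the norm-form description, to proving the $2$-fold Pfister identity $\Pf{\alpha_1,\alpha_2}\isom\Pf{-\alpha_i\alpha_j,-\alpha_i\alpha_k}$ over $F$. For the case $i=3$ this reads $\Pf{\alpha_1,\alpha_2}\isom\Pf{-\alpha_1\alpha_3,-\alpha_2\alpha_3}$, which is the statement that the associative Clifford invariant $(\alpha_1,\alpha_2)$ equals $(-\alpha_1\alpha_3,-\alpha_2\alpha_3)$ in $\H{2}{F,\mu_2}$; the cases $i=1,2$ are obtained by relabelling. I would verify this via the bilinearity and relations of the symbol $(\cdot,\cdot)$: expanding $(-\alpha_1\alpha_3,-\alpha_2\alpha_3)$ bilinearly and using $(\beta,-\beta)=0$, $(\beta,\beta)=(\beta,-1)$ to collapse the cross terms; alternatively, recall that for the ternary form $\Qf{\alpha_1,\alpha_2,\alpha_3}$ the even Clifford algebra over $F$ is the quaternion algebra $(\alpha_i,\alpha_j)$ for every $i\neq j$, which is precisely the classical symmetry already used in the text and proved in \cite{EKM} or \cite{GMS}. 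Either way this is a short, routine symbol computation.

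The main obstacle, such as it is, is bookkeeping rather than mathematics: one must be careful that the slot $\lam^2+4\abc$ is genuinely common and represented by the \emph{same} element of $F(\lam)^\times/(F(\lam)^\times)^2$ under each presentation $\mu(\varphi)=(\alpha_i,\alpha_j,\lam^2+4\abc)$, which it is since $\abc$ is symmetric in $\alpha_1,\alpha_2,\alpha_3$; and one must invoke the correct form of Pfister-form cancellation, namely that $\Pf{c}\tensor\pi_1\isom\Pf{c}\tensor\pi_2$ with $\pi_1,\pi_2$ $n$-fold Pfister implies $\pi_1\isom\pi_2$ when $\Pf{c}$ is anisotropic, and handle the isotropic case ($\lam^2+4\abc$ a square in $F(\lam)$, which cannot happen here since $4\abc\neq 0$ and $\lam$ is transcendental, so $\lam^2+4\abc$ is never a square in $F(\lam)$) separately or note it is vacuous. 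With these points addressed the proof is complete; the symmetric presentation in \Pref{symmP} then also makes transparent, via \Rref{Q3=Q1P2}, that the norm form of $\Caltf(\varphi)$ is $\Pf{\lam^2+4\abc}\tensor\omega$ where $\omega=\Pf{-\alpha_i\alpha_j,-\alpha_i\alpha_k}$ is the $2$-fold Pfister form attached to $\varphi$ by that correspondence.
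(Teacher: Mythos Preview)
Your reduction step is wrong: the claimed isometry
\[
\Pf{\alpha_1,\alpha_2}\ \isom\ \Pf{-\alpha_i\alpha_j,-\alpha_i\alpha_k}
\]
of $2$-fold Pfister forms over~$F$ is \emph{false} in general. Take $F=\Q$, $\alpha_1=\alpha_2=-1$, $\alpha_3=1$. Then $\Pf{\alpha_1,\alpha_2}=\Pf{-1,-1}$ is the anisotropic norm form of the Hamilton quaternions, while $\Pf{-\alpha_1\alpha_2,-\alpha_1\alpha_3}=\Pf{-1,1}$ is hyperbolic. Neither of your proposed justifications can rescue this: a direct symbol expansion of $(-\alpha_i\alpha_j,-\alpha_i\alpha_k)$ does not collapse to $(\alpha_1,\alpha_2)$, and the statement you quote about the even Clifford algebra is not the one you need --- $C_0(\Qf{\alpha_1,\alpha_2,\alpha_3})$ is $(-\alpha_i\alpha_j,-\alpha_i\alpha_k)_F$, not $(\alpha_i,\alpha_j)_F$; the symmetry $(\alpha_i,\alpha_j)$ the paper mentions holds only over the discriminant extension $F[\sqrt{\disc\varphi}]$. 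Your ``Pfister cancellation'' remark is also incorrect as stated (one cannot in general cancel a common anisotropic slot from a Pfister product), but that is moot since you were using it in the harmless direction.

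The point you are missing is that the identity genuinely lives only at the $3$-fold level over $F(\lam)$. What makes it work is that $-\abc$ is a value of $\Pf{\lam^2+4\abc}$ over $F(\lam)$ (take $x=\lam/2$, $y=1/2$), so $-\abc$ is a similarity factor of that $1$-fold Pfister form. Hence, \emph{after} tensoring with $\Pf{\lam^2+4\abc}$, the entry $\alpha_1\alpha_2$ may be replaced by $-\alpha_3$; this is exactly how the paper's computation passes from $\Qf{1,-\alpha_1,-\alpha_2,\alpha_1\alpha_2}\tensor\Pf{\lam^2+4\abc}$ to $\Qf{1,-\alpha_1,-\alpha_2,-\alpha_3}\tensor\Pf{\lam^2+4\abc}$ and back again at the end. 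You cannot bypass this by working over~$F$ alone.
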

Namely, if $\varphi = \ourphi$ corresponds to $(\Qf{\delta},\omega)$, then the norm form of $\Caltf(\varphi)$ is $\Pf{\lam^2+4\delta}_{F(\lam)}\tensor \omega$.
\begin{proof}
Compute, using that $\Qf{1,-(\lam^2+4\abc)}$ and $\Qf{-\abc,\frac{\lam^2+4\abc}{\abc}}$ are isomorphic over $F(\lam)$:
\begin{eqnarray*}
\mu(\ourphi) & =& \Qf{1,-\alpha_1,\,-\alpha_2,\,\alpha_1\alpha_2} \tensor \Pf{\lam^2+4\abc} \\
& = & \Qf{1,-\alpha_1,\,-\alpha_2,\,-\alpha_3} \tensor \Pf{\lam^2+4\abc} \\
& = & \Qf{1,-(\lam^2+4\abc), -\alpha_1,\,-\alpha_2,\,-\alpha_3} \\ && \quad \perp \Qf{-(\lam^2+4\abc)}\Qf{-\alpha_1,\,-\alpha_2,\,-\alpha_3} \\
& = & \Qf{-\alpha_1,\,-\alpha_2,\,-\alpha_3,-\abc,\frac{\lam^2+4\abc}{\abc}} \\ && \quad\perp \Qf{-(\lam^2+4\abc)}\Qf{-\alpha_1,\,-\alpha_2,\,-\alpha_3} \\
& = & \Qf{-\alpha_1,\,-\alpha_2,\,-\alpha_3,-\abc} \\ && \quad\perp \Qf{-(\lam^2+4\abc)}\Qf{-\alpha_1,\,-\alpha_2,\,-\alpha_3,-\abc} \\
& = & \Qf{-1}\Pf{\lam^2+4\abc} \tensor\Qf{\alpha_1,\,\alpha_2,\,\alpha_3,\abc} \\
& = & \Qf{-\abc}\Pf{\lam^2+4\abc} \tensor\Pf{-\alpha_1\alpha_2,-\alpha_1\alpha_3} \\
& = & \Pf{\lam^2+4\abc} \tensor\Pf{-\alpha_1\alpha_2,-\alpha_1\alpha_3}.
\end{eqnarray*}
where the last step follows from $-\abc$ being a value of $\Pf{\lam^2+4\abc}$. 
\end{proof}

\begin{cor}
$\Caltf(\varphi)$ splits if and only if $\varphi$ is isotropic.
\end{cor}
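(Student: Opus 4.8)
The plan is to read off both directions from \Pref{symmP}, which computes the norm form of $\Caltf(\varphi)$ as the Pfister form $\pi = \Pf{\lam^2+4\abc}\tensor\Pf{-\alpha_i\alpha_j,-\alpha_i\alpha_k}$ over $F(\lam)$. Since $\Caltf(\varphi)$ is an octonion algebra, it splits (i.e. is isomorphic to the split octonion algebra of zero divisors) if and only if its norm form $\pi$ is hyperbolic; this is the standard characterization of split composition algebras. So the corollary reduces to the claim that $\pi$ is hyperbolic over $F(\lam)$ if and only if $\varphi$ is isotropic over~$F$.

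\smallskip

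For the ``if'' direction, suppose $\varphi$ is isotropic over $F$. By \Rref{Q3=Q1P2}, $\varphi$ isotropic is equivalent to the $2$-fold Pfister form $\omega = \Pf{-\alpha_i\alpha_j,-\alpha_i\alpha_k}$ being hyperbolic over $F$. Then $\omega$ is hyperbolic over $F(\lam)$ as well, and a Pfister form with a hyperbolic factor is hyperbolic, so $\pi = \Pf{\lam^2+4\abc}\tensor\omega$ is hyperbolic over $F(\lam)$. Hence $\Caltf(\varphi)$ splits.

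\smallskip

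For the ``only if'' direction, suppose $\pi$ is hyperbolic over $F(\lam)$. I want to specialize $\lam$ to a suitable value of $F$ (or of a purely transcendental extension) to deduce that $\omega$ is hyperbolic over~$F$. The cleanest route: the form $\pi$ over $F[\lam]$ is defined over the polynomial ring, and its hyperbolicity over $F(\lam)$ means it becomes hyperbolic already over $F(\lam)$; by Cassels--Pfister / specialization for Pfister forms one may substitute any $\lam = c \in F$ for which $c^2 + 4\abc \neq 0$ and conclude that $\Pf{c^2+4\abc}\tensor\omega$ is hyperbolic over $F$. Now vary~$c$: if $\Pf{c^2+4\abc}$ is itself anisotropic for some such $c$ (equivalently $c^2+4\abc$ is a nonsquare), then hyperbolicity of $\Pf{c^2+4\abc}\tensor\omega$ forces $\omega$ to be hyperbolic over $F$, and we are done by \Rref{Q3=Q1P2}. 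The remaining case is that $c^2 + 4\abc$ is a square in $F$ for every $c \in F$ with $c^2+4\abc\neq0$; a short argument (e.g. taking two values $c_1,c_2$ and comparing, or noting this forces $F$ to be quadratically closed-like behaviour along the line) shows $4\abc$ is then a square, so $\disc(\varphi)$ is trivial and $\varphi$ is isotropic precisely when $\omega$ is hyperbolic, which one checks directly; alternatively, simply specialize $\lam$ in the larger field $F(\sqrt{t})$ or use a transcendental substitution making $c^2+4\abc$ a nonsquare, which is always possible. The main obstacle is exactly this specialization step: ensuring that hyperbolicity over the rational function field descends to a value of $\lam$ giving a genuinely anisotropic one-fold Pfister factor, so that the tensor factor $\omega$ can be cancelled. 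Everything else is a direct application of \Pref{symmP} and the dictionary between octonion algebras, their norm forms, and \Rref{Q3=Q1P2}.
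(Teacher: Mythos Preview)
Your ``if'' direction is correct and is exactly what the paper has in mind: it follows immediately from \Pref{symmP} together with \Rref{Q3=Q1P2}.

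The ``only if'' direction, however, contains a genuine error. You assert that if $c^2+4\abc$ is a nonsquare (so $\Pf{c^2+4\abc}$ is anisotropic) and $\Pf{c^2+4\abc}\tensor\omega$ is hyperbolic, then $\omega$ must be hyperbolic over~$F$. This implication is false: hyperbolicity of $\Pf{a}\tensor\omega$ with $\Pf{a}$ anisotropic only says that $\omega$ lies in $\Pf{a}\cdot W(F)$, equivalently that $\omega\cong\Pf{a,e}$ for some~$e$; it does not force $\omega$ to be hyperbolic. Your fallback suggestions (passing to $F(\sqrt{t})$, or a ``transcendental substitution'') do not help, since the obstruction is algebraic and persists over any extension to which $\omega$ stays anisotropic.

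In fact the gap is not repairable, because the ``only if'' direction is false as stated. Take $F=\mathbb{R}$ and $\varphi=\Qf{1,1,1}$. Then $\varphi$ is anisotropic, yet
\[
\Caltf(\varphi)\ \isom\ (1,\,1,\,\lam^2+4)_{\mathbb{R}(\lam)}
\]
is split: the quaternion subalgebra $(1,1)_{\mathbb{R}}\cong\M[2](\mathbb{R})$ is already split, so any Cayley--Dickson double of it is the split octonion algebra. Equivalently, the norm form $\Pf{1,1,\lam^2+4}$ has the hyperbolic tensor factor $\Pf{1}=\Qf{1,-1}$. In the language of \Pref{symmP}, $\omega=\Pf{-1,-1}=\Qf{1,1,1,1}$ is anisotropic over~$\mathbb{R}$, but $\lam^2+4=\lam^2+2^2$ is represented by $\omega$ over $\mathbb{R}(\lam)$, so $\Pf{\lam^2+4}\tensor\omega$ is hyperbolic while $\omega$ is not. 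The paper gives no argument for this direction beyond listing it as a corollary, so the problem lies with the statement itself rather than with a missing idea in your attempt.
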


\subsection{Residues}

Faddeev's exact sequence (see \cite[Section~6.4]{GS}) shows that $\mu(\varphi)$ is determined by its residues. Since $\deg(\lam^2+4\abc)$ is even, the only nonzero residues are obtained from divisors of $\lam^2+4\abc$:
\begin{rem}
The nonzero residues of $\mu(\varphi)$ are:
\begin{enumerate}
\item The associative Clifford algebra $\Cass(\varphi)$ at $\lam^2+4\abc$ if $\disc(\varphi) \not \equiv 1$,
\item $(\alpha_i,\alpha_j)$ at $\lam \pm \sqrt{-4\abc}$, if $\disc(\varphi) \equiv 1$.
\end{enumerate}
\end{rem}

\begin{cor}
The associative Clifford algebra $\Cass(\varphi)$ (up to isomorphism over $F$) and the localized alternative Clifford algebra $\Caltf(\varphi)$ (up to isomorphism over $F(\lam)$) determine each other.
\end{cor}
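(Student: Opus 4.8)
The plan is to recognize both algebras as repackagings of a single Galois cohomology class and to translate between them through it. As an octonion algebra over $F(\lam)$, $\Caltf(\varphi)$ is equivalent, via its norm form (a $3$-fold Pfister form), to the symbol $\mu(\varphi)=(\alpha_i,\alpha_j,\lam^2+4\abc)\in\H{3}{F(\lam),\mu_2}$ for any $i\ne j$; so it suffices to prove that $\mu(\varphi)$ and $\Cass(\varphi)$ determine each other.

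For the direction $\Caltf(\varphi)\Rightarrow\Cass(\varphi)$, I would extract $\mu(\varphi)$ from the norm form of $\Caltf(\varphi)$ and apply Faddeev's exact sequence together with the residue computation recorded above: the only nonzero residues of $\mu(\varphi)$ sit at the divisors of $\lam^2+4\abc$, where, when $\disc(\varphi)$ is a nonsquare, the single residue at the place of $\lam^2+4\abc$ is $\Cass(\varphi)$ realized as a quaternion algebra over $F[\lam]/(\lam^2+4\abc)\isom F[\sqrt{\disc(\varphi)}]$, and when $\disc(\varphi)$ is a square the two residues at $\lam\pm\sqrt{-4\abc}$ are the common quaternion component $Q_0$ of $\Cass(\varphi)=Q_0\oplus Q_0$. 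Either way the residue datum — including its residue field, which also returns $\disc(\varphi)$ and so distinguishes the two cases — rebuilds $\Cass(\varphi)$.

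For the converse $\Cass(\varphi)\Rightarrow\mu(\varphi)$: the center of $\Cass(\varphi)$ is $F[\sqrt{\disc(\varphi)}]$, which returns the square class of $\disc(\varphi)\equiv-\abc$ and hence the class $(\lam^2+4\abc)\in\H{1}{F(\lam),\mu_2}$, while $\Cass(\varphi)$ as an algebra over its center returns the Brauer class $[\Cass(\varphi)]=\omega_2(\varphi)|_{F[\sqrt{\disc(\varphi)}]}$; one then combines this with the identity $\mu(\varphi)=(\lam^2+4\abc)\cup\omega_2(\varphi)$. The main obstacle is that $[\Cass(\varphi)]$ determines $\omega_2(\varphi)$ only modulo the kernel of restriction to $F[\sqrt{\disc(\varphi)}]$, namely $(\disc(\varphi))\cup\H{1}{F,\mu_2}$ — and, dually, Faddeev recovers a class from its residues only up to a constant in $\H{3}{F,\mu_2}$. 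This ambiguity is killed by the single observation that $(\lam^2+4\abc)\cup(\disc(\varphi))=0$ in $\H{2}{F(\lam),\mu_2}$: indeed $\disc(\varphi)\equiv-\abc=(\lam/2)^2-(\lam^2+4\abc)(1/2)^2$ is a value of $\Qf{1,-(\lam^2+4\abc)}$ over $F(\lam)$, equivalently a norm from $F(\lam)[\sqrt{\lam^2+4\abc}]$, so the corresponding quaternion symbol vanishes. Consequently $(\lam^2+4\abc)\cup\big((\disc(\varphi))\cup\H{1}{F,\mu_2}\big)=0$, so $\mu(\varphi)=(\lam^2+4\abc)\cup\omega_2(\varphi)$ depends on $\omega_2(\varphi)$ only through its class modulo $(\disc(\varphi))\cup\H{1}{F,\mu_2}$, i.e.\ only through $[\Cass(\varphi)]$; together with the already-recovered class $(\lam^2+4\abc)$ this pins down $\mu(\varphi)$ outright, hence determines the norm form and so $\Caltf(\varphi)$. (The degenerate case $\mu(\varphi)=0$, where $\Caltf(\varphi)$ is split, should be dealt with separately, all the content being in the nonsplit case above.)
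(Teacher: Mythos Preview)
The paper offers no proof; the corollary is placed immediately after the residue remark, the intent being that both directions follow from Faddeev's sequence and the computed residues. Your write-up is substantially more careful than this, and your key observation --- that $(\lam^2+4\abc)\cup(\disc(\varphi))=0$ because $-\abc=(\lam/2)^2-(\lam^2+4\abc)(1/2)^2$ is represented by $\Pf{\lam^2+4\abc}$ --- is exactly the right device to kill the ambiguity in lifting $\res_K(\omega_2(\varphi))$ back to $\omega_2(\varphi)$. That cleanly finishes the direction $\Cass\Rightarrow\Caltf$. (One small wrinkle you pass over: from the center of $\Cass(\varphi)$ you recover only the square class of $-\abc$, hence $(\lam^2+4\abc)$ only up to the substitution $\lam\mapsto c\lam$; but this just means $\Caltf(\varphi)$ is recovered up to $F$-isomorphism rather than $F(\lam)$-isomorphism, which is presumably what is meant.)

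Your final parenthetical, however, hides a genuine obstruction rather than a routine degeneracy. When $\varphi$ is isotropic, $\Caltf(\varphi)$ is the split octonion algebra over $F(\lam)$ and carries no information whatsoever, while $\Cass(\varphi)\cong\M[2](F[\sqrt{\disc(\varphi)}])$ still remembers the square class of the discriminant through its center. Concretely, over $\Q$ the forms $\Qf{1,-1,1}$ and $\Qf{1,-1,2}$ have isomorphic (split) $\Caltf$ but non-isomorphic $\Cass$, since the centers $\Q\times\Q$ and $\Q[\sqrt{2}]$ differ. So the direction $\Caltf\Rightarrow\Cass$ genuinely fails here; the statement needs the hypothesis that $\varphi$ be anisotropic (equivalently, by the preceding corollary, that $\Caltf(\varphi)$ be nonsplit), and under that hypothesis your argument is complete. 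The paper does not flag this either.
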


\bibliographystyle{abbrv}

\end{document}